\pgfplotsset{compat=1.18}
\apptocmd{\sloppy}{\hbadness 10000\relax}{}{}
\newcommand\undermat[2]{%
  \makebox[0pt][l]{$\smash{\underbrace{\phantom{%
    \begin{matrix}#2\end{matrix}}}_{\text{$#1$}}}$}#2}
\newcommand{\conv}{\mathrm{conv}}
\newcommand{\ehr}{\mathrm{ehr}}
\newcommand{\Vol}{\mathrm{Vol}}
\newcommand{\orow}{\rowcolor[gray]{0.95}}
\newcommand{\erow}{}
\newtheorem{theorem}{Theorem}[section]
\newtheorem{example}[theorem]{Example}
\newtheorem{lemma}[theorem]{Lemma}
\newtheorem{proposition}[theorem]{Proposition}
\newtheorem{question}[theorem]{Question}
\newtheorem{remark}[theorem]{Remark}
\theoremstyle{definition}
\date{}
\title{A genetic algorithm to search the space of Ehrhart $h^*$-vectors}
\author[G.~Balletti]{Gabriele Balletti}
\address{(G. Balletti)
Stockholm, Sweden
}
\email{gabriele.balletti@gmail.com}
\subjclass[2020]{52B20, 05A20, 68W50}
\begin{document}

\begin{abstract}
    We describe a genetic algorithm to find candidates for $h^*$-vectors satisfying given properties in the space of integers vectors of finite length. 
    We use an implementation of such algorithm to find a 52-dimensional lattice polytope having a non-unimodal $h^*$-vector which is the Cartesian product of two lattice polytopes having unimodal $h^*$-vectors.
    This counterexample answers negatively to a question by Ferroni and Higashitani.
\end{abstract}

\maketitle

\section{Introduction}

A lattice polytope $P \subset \mathbb{R}^n$ is a convex polytope whose vertices are elements of the lattice $\mathbb{Z}^n$.
The map $t \mapsto |tP \cap \mathbb{Z}^n|$ for integer values of $t$ has been proven by Ehrhart \cite{ehrhart1962polyedres} to be a polynomial $\ehr_P(x) = c_d x^d + c_{d-1} x^{d-1} + \cdots + c_0$ of degree $d$, where $d$ is the dimension of $P$.
This is the \emph{Ehrhart polynomial} of $P$.
Through a change of basis, one can obtain the \emph{$h^*$-polynomial} of $P$ as the polynomial $h^*_P(x) = h^*_0 + h^*_1 x + \cdots + h^*_d x^d$, which is explicitly given by
\begin{equation}\label{eq:e_to_h}
    h^*_P(x) = \sum_{i=0}^{d} c_i A_i(x) (1-x)^{d-i},
\end{equation}
where $A_i(x)$ is the $i$-th Eulerian polynomial.
The inverse transformation, which allows to recover $\ehr_P$ from $h^*_P$, is given by
\begin{equation}\label{eq:h_to_e}
    \ehr_P(x) = \sum_{i=0}^d h^*_i \binom{x+d-i}{d}.
\end{equation}

The significance of these changes of bases comes from the algebraic interpretation of $\ehr_P$ and $h^*_P$ as the Hilbert function and polynomial of the Ehrhart ring associated to $P$.
See \cite{beck2007computing} for a comprehensive introduction to invariants of lattice polytopes and their algebraic meaning.

While the coefficients of the Ehrhart polynomial are rational numbers, the coefficients of the $h^*$-polynomial are nonnegative integers.
This follows from a combinatorial interpretation of the coefficients $h^*_i$, but holds in the more general settings of rational polytopes, thanks to a result by Stanley \cite{stanley1980decompositions}.
Moreover, it is easy to deduce that $h^*_0 = 1$, $h^*_1 = |P \cap \mathbb{Z}^n| - d - 1$, and $h^*_d = |P^\circ \cap \mathbb{Z}^n|$, where $P^\circ$ is the relative interior of $P$.
From the last two equalities it follows immediately that $h^*_1 \geq h^*_d$.
Other more profound inequalities have been proven over the years, we collect the most relevant ones by Stanley \cite{stanley1991hilbert}, Hibi \cite{hibi1994bound} and Stapledon \cite{stapledon2009inequalities} in Theorem~\ref{thm:conditions}, together with the previous one.

\begin{theorem}\label{thm:conditions}
    Let $P$ be a lattice polytope of dimension $d$ and let $s$ be the degree of its $h^*$-polynomial $h^*_P$.
    Then, the coefficients of $h^*_P$ satisfy the following inequalities:
    \begin{enumerate}
        \item $h^*_1 \geq h^*_d$,\label{item:condition_3}
        \item $h^*_2+h^*_3+\cdots+h^*_{i} \geq h^*_{d-1}+h^*_{d-2}+\cdots+h^*_{d-i+1}$ for every $2\leq i\leq \lfloor\frac{d}{2}\rfloor$,\label{item:condition_4}
        \item $h^*_0+h^*_1+\cdots+h^*_i \leq h^*_s+h^*_{s-1} + \cdots + h^*_{s-i}$ for every $0\leq i\leq \lfloor \frac{s}{2} \rfloor$,
        \item If $s=d$, then $h^*_1\leq h^*_i$ for all $1\leq i\leq d - 1$, and
        \item If $s\leq d-1$, then $h^*_0+h^*_1 \leq h^*_i + h^*_{i-1} + \cdots + h^*_{i-(d-s)}$ for all $1\leq i\leq d-1$.
    \end{enumerate}
\end{theorem}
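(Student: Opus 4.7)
Theorem~\ref{thm:conditions} is a compendium of classical results, so the plan is to assemble its five parts from the (different) techniques used in the original references, rather than to prove them all by a single unified method.

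Inequality (1) is essentially free from the formulas recalled just above the statement: since $h^*_1 = |P\cap\Z^n|-d-1$ and $h^*_d = |P^\circ\cap\Z^n|$, the difference $h^*_1-h^*_d$ equals $|\partial P\cap\Z^n|-d-1$, which is non-negative because a $d$-dimensional lattice polytope has at least $d+1$ vertices, all lying on the boundary. I would write this as the warm-up step, since no deeper machinery is needed.

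For inequalities (2) and (4), I would pass to the algebraic side and use the Ehrhart ring $\C[P]$, a standard graded Cohen–Macaulay domain whose Hilbert series has numerator $h^*_P(x)$. Stanley's monotonicity theorem, applied to the inclusion of $\C[P]$ into the semigroup algebra of the interior cone of $P$ (whose numerator is, by Ehrhart–Macdonald reciprocity, $x^{d+1-s}h^*_P(1/x)$ up to conventions), immediately gives (4) in the full-degree case $s=d$. Inequality (2) comes from Hibi's manipulation of the same pair of series: comparing coefficients of $h^*_P$ and its reciprocity-dual in blocks of consecutive degrees, using non-negativity of the $h^*$-vector of $P^\circ$, produces exactly the stated partial-sum comparison.

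For (3) and (5), I would follow Stapledon's approach, which is the technically deepest ingredient. The key device is the decomposition
\[
(1+x+\cdots+x^{d-s})\,h^*_P(x) = a(x) + x\,b(x),
\]
where $a$ and $b$ have non-negative integer coefficients and satisfy the palindromic identities $a(x)=x^d a(1/x)$ and $b(x) = x^{d-1} b(1/x)$. Once this decomposition is in hand, inequalities (3) and (5) follow by extracting the appropriate partial sums on either side and using non-negativity of the $a_i$ and $b_i$. The main obstacle is exactly the construction of this decomposition: the naive definition $a(x) := (1+\cdots+x^{d-s})h^*_P(x) - x\,b(x)$ is easy to write down, but proving that both halves have non-negative coefficients requires a genuine lattice-point argument (a weighted Ehrhart-type identity relating $P$ to its boundary) that cannot be short-circuited. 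Once these three ingredients—the elementary bound, the Cohen–Macaulay/monotonicity input, and Stapledon's palindromic decomposition—are in place, all five inequalities follow by direct comparison of coefficients.
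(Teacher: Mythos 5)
The paper never proves items (2)--(5): it derives item (1) inline in the introduction from $h^*_1 = |P\cap\Z^n|-d-1$ and $h^*_d = |P^\circ\cap\Z^n|$ exactly as you do, and for the rest it simply cites Stanley, Hibi and Stapledon. So your part (1) is fine and matches the paper, and your overall plan of assembling the remaining inequalities from those three sources is the intended one. However, two of your three ingredients are not correct as stated.

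The most concrete problem is the Stapledon decomposition. The correct identity is $(1+x+\cdots+x^{d-s})\,h^*_P(x) = a(x) + x^{d+1-s}\,b(x)$ with $a(x)=x^d a(1/x)$, $b(x)=x^{s-1}b(1/x)$ and all $a_i,b_i\ge 0$; you shift $b$ by $x$ rather than $x^{d+1-s}$ and impose $b(x)=x^{d-1}b(1/x)$. Your constraints still determine $(a,b)$ uniquely, but the resulting coefficients can be negative: for a Reeve-type tetrahedron with $h^*=(1,0,3,0)$ (so $d=3$, $s=2$), your equations force $a_1 = 2 - 3 = -1$, whereas the correct decomposition gives $a=(1,1,1,1)$, $b=(2,2)$. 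Moreover, even with the right decomposition, nonnegativity of $a$ and $b$ alone yields only (3) (namely $b_i\ge 0$) and the aggregated form of (1)+(2) (namely $a_i\ge 0$); inequalities (2), (4) and (5) additionally require Stapledon's refined inequalities $1=a_0\le a_1\le a_i$ for $2\le i\le d-1$ and $b_0\le b_i$, which need a separate lattice-point argument and cannot be read off from nonnegativity. Relatedly, your middle ingredient misattributes and misdescribes the mechanism: (4) is Hibi's lower bound theorem, proved via the canonical module of the Cohen--Macaulay Ehrhart ring (not via Stanley's monotonicity theorem, which concerns containments of polytopes); (3) is Stanley's 1991 result; and (2), like (5), is one of the genuinely new inequalities of Stapledon's paper -- the ``block comparison with the reciprocity dual'' you describe only re-expresses the nonnegativity of the reversed $h^*$-vector and does not yield (2).
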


All these inequalities are known to be sharp.
There are other general inequalities that have been proven in recent years, most notably the work of Stapledon \cite{stapledon2016additive}, but we will not consider them here.
Even considering all the known inequalities, we are still far from having a complete understanding of the set of all possible $h^*$-vectors of lattice polytopes, and even in dimension three the problem is still open (see \cite[Conjecture~8.7]{balletti2021enumeration}).
%  and the little we know relies on exhaustive computations \cite{kasprzyk2010canonical,balletti2016three}.

A set of inequalities between the coefficients of $h^*_P$ which is of particular interest, is the one defining unimodality.
We say that $h^*_P$ is \emph{unimodal} if there exists an index $i$ such that $h^*_0 \leq h^*_1 \leq \cdots \leq h^*_i \geq h^*_{i+1} \geq \cdots \geq h^*_d$.
While, for a general polytope $P$, unimodality of the $h^*$-polynomial is not guaranteed, a particular hierarchy of polytopes has attracted much attention in connection to this property, with a central role played by the so-called IDP polytopes.
A polytope $P$ is \emph{IDP} (short for has the \emph{integer decomposition property}) if for every positive integer $k$, any lattice point $p\in kP\cap\mathbb{Z}^n$ can be written as $p = p_1+\cdots+p_k$ where $p_1,\ldots,p_k\in P\cap \mathbb{Z}^n$.
Proving that the $h^*$-polynomial of an IDP polytope is unimodal is arguably one of today's most significant challenges in Ehrhart Theory.
It has been conjectured to be true -- in a more general setting -- by Brenti \cite{brenti1994combinatorics} in the nineties, while a direct formulation can be found in \cite{schepers2013unimodality}.
This challenge has been the object of recent developments, as a proof has been given by Adiprasito, Papadakis, Petrotou and Steinmeyer \cite{adiprasito2022beyond}, under the additional assumption of reflexivity.
It is not known if the IPD assumption can be relaxed, e.g. to very ample polytopes.
See the surveys by Braun \cite{braun2016unimodality} and by Ferroni and Higashitani \cite{ferroni_higashitani2023} for definitions, examples, and a detailed overview of the state of the art.

Unimodality of $h^*$-polynomial is often studied in conjunction with two stronger properties, namely log-concavity and real-rootedness.
We say that a degree $s$ polynomial $\sum_{i=0}^s a_i x^i$ is \emph{log-concave} if $a_i^2 \geq a_{i-1} a_{i+1}$ for all $1 \leq i \leq s-1$.
The same polynomial is said to be \emph{real-rooted} if all its roots are real.
We restrict these notions to polynomials with positive coefficients, i.e. $a_i > 0$ for all $0 \leq i \leq s$.
Under these assumptions, it is well known that log-concavity implies unimodality, and that real-rootedness implies log-concavity.

In this context, Ferroni and Higashitani focus on cartesian products of lattice polytopes, the idea being that being IDP, as well as other related properties in the aforementioned hierarchy, are preserved under cartesian products \cite[Proposition~2.2]{ferroni_higashitani2023}.
Moreover, it follows from a more general result of Wagner \cite{wagner1992total}, that the $h^*$-polynomial of the cartesian product of two polytopes with real-rooted $h^*$-polynomials is real-rooted too.
It is natural then to wonder if the log-concavity and the unimodality of the $h^*$-polynomial are preserved under cartesian products.
They do so in \cite[Question~3.4]{ferroni_higashitani2023}, expecting a positive answer for log-concavity and a negative answer for unimodality.
In this paper we focus on, and answer to, the unimodality question.

\begin{question}[{\cite{ferroni_higashitani2023}}]\label{question:main}
    Let $P$ and $Q$ be two lattice polytopes.
    If $h^*_P(x)$ and $h^*_Q(x)$ are unimodal, is $h^*_{P\times Q}(x)$ necessarily unimodal too?
\end{question}

A negative answer is given by constructing examples of lattice polytopes having a non-unimodal $h^*$-vector which are cartesian products of polytopes having unimodal $h^*$-vectors.
We also construct a lattice polytope having a non-unimodal $h^*$-vector which is the cartesian product of a polytope having unimodal $h^*$-vectors with itself.
This means that we cannot exclude cartesian products from the tools one might attempt to use to disprove unimodality conjectures.
The lowest dimension we can construct such a counterexample is 52.

\subsection{Operators between Ehrhart and \texorpdfstring{$h^*$}{}-polynomials}
One might attempt a direct approach to Question~\ref{question:main} by explicitly describing the $h^*$-polynomial of the cartesian product of two polytopes in terms of the $h^*$-polynomials of the two factors.
While it follows directly from the definitions that $\ehr_{P_1 \times P_2} = \ehr_{P_1} \ehr_{P_2}$ for any two polytopes $P_1$ and $P_2$, the description of the $h^*$-polynomial of the cartesian product is not as straightforward.
We can define two operators $\mathscr{W}$ and $\mathscr{E}$, each inverse of the other, mapping Ehrhart polynomials to the corresponding $h^*$-polynomials and vice versa.
Since the dimension $d$ of the polytope plays a role in these transformations, it is convenient to think of these operators as being defined between the spaces of vectors of finite length.
Here, each vector stores the coefficients of the corresponding polynomial up to degree $d$, regardless of the actual degree of the polynomial.
For the $h^*$-polynomial, the corresponding vector is the vector $(h^*_0, h^*_1, \ldots, h^*_d)$, while for the Ehrhart polynomial it is the vector $(c_0, c_1, \ldots, c_d)$.
The first of the two is known as the \emph{$h^*$-vector} of the polytope $P$, and it is denoted by $h^*_P$; note that it can have trailing zeroes.
We can use Equations~\eqref{eq:e_to_h} and \eqref{eq:h_to_e} to extend $\mathscr{W}$ and $\mathscr{E}$ to the space of all real vectors of finite length, which we write as the disjoint union $\bigsqcup_{i > 0} \mathbb{R}^i$:
\begin{align*}
    \mathscr{W}: \bigsqcup_{i > 0} \mathbb{R}^i  &\to \bigsqcup_{i > 0} \mathbb{R}^i \\
    (c_0, c_1, \ldots, c_d) &\mapsto \sum_{i=0}^{d} c_i A_i(x) (1-x)^{d-i},
\end{align*}
and
\begin{align*}
    \mathscr{E}: \bigsqcup_{i > 0} \mathbb{R}^i  &\to \bigsqcup_{i > 0} \mathbb{R}^i \\
    (h^*_0, h^*_1, \ldots, h^*_d) &\mapsto \sum_{i=0}^d h^*_i \binom{x+d-i}{d}.
\end{align*}
If we now define the binary operator
\begin{equation}\label{eq:prod}
    \begin{aligned}
        \Pi: \bigsqcup_{i > 0} \mathbb{R}^i \times \bigsqcup_{i > 0} \mathbb{R}^i  &\to \bigsqcup_{i > 0} \mathbb{R}^i \\
        (h, h') &\mapsto \mathscr{W}(\mathscr{E}(h) \mathscr{E}(h')), 
    \end{aligned}
\end{equation}
then we have a compact way to express the $h^*$-vector of a cartesian product:
\[
    h^*_{P_1 \times P_2} = \Pi(h^*_{P_1}, h^*_{P_2}).
\]

While this description allows one to concretely calculate the $h^*$-polynomial of the cartesian product when the two factors are given, it is very hard to deduce general statements about the unimodality of $h^*$-polynomials from it, partly because explicitly expanding the operators $\mathscr{W}$ and $\mathscr{E}$ is nontrivial when the dimensions in which we operate are not fixed, partly because we would have to consider all possible ways in which the $h^*$-polynomials of the factors and of the product can be unimodal.

For these reasons, we take a different approach to build our way to a counterexample.
We start from the following example from \cite{ferroni_higashitani2023}.

\begin{example}[{\cite[Example~3.5]{ferroni_higashitani2023}}]\label{ex:ferroni_higashitani}
    The vectors $h = (1, 1, 1, 1, 1, 6)$ and $h' = (1, 1, 1, 1, 2, 5)$ are unimodal, while $\Pi(h, h') = (1, 38, 300, 962, 2059, 7442, 7194, 7292, 4320, 854, 30)$ is not.
\end{example}

Note that it is not a counterexample to Question~\ref{question:main}, as the vectors involved cannot be $h^*$-vectors of lattice polytopes, as they violate inequality \ref{item:condition_3} of Theorem~\ref{thm:conditions}. 
We use this example (and a slight variant of it, Example~\ref{ex:ferroni_higashitani_bis}), as a starting point for a genetic algorithm designed to explore the space of nonnegative integer vectors of finite length, searching for possible counterexamples to Question~\ref{question:main}.
The idea is to keep mutating the vectors $h$ and $h'$ until they become ``more and more similar to $h^*$-vectors of lattice polytopes'', in the hope that they will eventually become $h^*$-vectors of lattice polytopes.
We do so by defining a fitness function which penalizes vectors depending on how much they violate the inequalities of Theorem~\ref{thm:conditions}.
This needs to be done while ensuring that $h$ and $h'$ remain unimodal, while $\Pi(h, h')$ remains non-unimodal.

\subsection{Structure of the paper}
In Section~\ref{sec:methodology} we give a brief introduction to genetic algorithms, as well as set the notation and terminology for the actual implementation.
Section~\ref{sec:implementation} is the core of this paper, as it contains details on the implementation of the algorithm, and the results obtained.
In Section~\ref{sec:realizing} we show how to realize the vectors found by the algorithm as $h^*$-vectors of lattice polytopes, thus answering Question~\ref{question:main} negatively.
A final section, Section~\ref{sec:conclusion}, includes some concluding remarks and possible future developments.

\subsection{Acknowledgements}
This project started in response to an unexpected invitation to give a talk at the KTH Combinatorics Seminar in Stockholm, after several years away from academia.
I am very grateful to Luis Ferroni for providing this opportunity, and for double checking some computations present in this paper.
Both he and Akihiro Higashitani provided valuable feedback on a preliminary version of this paper.
I am grateful to the two anonymous referees whose suggestions helped refine this work.

\section{An introduction to genetic algorithms}\label{sec:methodology}

Genetic algorithms were formalized by Holland in the 1960s.
The original intent was not that of designing a process to solve a specific problem, but rather building a formal framework to study evolutionary phenomena as they occur in nature.
In his work, summarized in \cite{holland1992adaptation}, Holland describes genetic algorithms as an abstraction of biological evolution, where genetic operators such as mutation, crossover are used to simulate the natural selection of individuals in a population, and where the fitness of an individual is determined by its ability to survive and reproduce.
Nowadays, genetic algorithms are used in a wide variety of fields, mainly as a tool to solve optimization problems in which traditional methods are not applicable, or are not efficient enough.
See the introduction by Mitchell \cite{mitchell1998introduction} for a detailed overview of genetic algorithms and their use.

Today there is no universally accepted definition of what a genetic algorithm is, and there are many different variants of the same basic idea.
Often, the term \emph{evolutionary algorithm} is used to refer to a broader class of algorithms which includes genetic algorithms and are inspired by the same biological principles.

In this section we give a brief description of what a genetic algorithm is, trying to both tailor it to our needs, but also remaining quite faithful to the original formulation by Holland.
This acts both as a reference for the implementation described in Section~\ref{sec:implementation}, and as way to fix notation and terminology.

\subsection{Genetic algorithms in a nutshell}

The goal of a genetic algorithm is to look for certain solutions in a set, which we call the \emph{search space}.
We model the search space with a directed weighted graph on a (possibly infinite) vertex set $V$, and with edges having weights defined by a function $\mu: V \times V \to \mathbb{R}_{\geq 0}$.
The edges are the pairs $(v, v') \in V \times V$ such that $\mu(v, v') > 0$.

Elements of $V$ are called \emph{individuals} or \emph{chromosomes}, and a subset $U \subset V$ is referred to as a \emph{population}.

The directed edges define the possible \emph{mutations} between pairs of individuals, with probabilities given by the weights.
For example, the directed edge $e = v_1 \to v_2$ indicates that the individual $v_1$ can mutate into the individual $v_2$ with probability $\mu(v_1, v_2)$. With an abuse of notation we indicate by $\mu(v)$ the operation of sampling an individual from the distribution described by $\mu(v, \cdot)$ starting from an initial individual $v$.

Two individuals $v_1, v_2 \in V$ can generate a new individual $u \in V$ with probabilities given by a \emph{crossover operation} $\chi: (V \times V) \times V \to \mathbb{R}_{\geq 0}$, where $\chi((v_1, v_2), u)$ describes the probability that the individual $u$ is generated by crossing over the individuals $v_1$ and $v_2$.
In general, $\chi$ is assumed to be a symmetric operation in the first two coordinates, i.e. $\chi((v_1, v_2), \cdot) = \chi((v_2, v_1), \cdot)$.
With another abuse of notation, we will write $\chi(v_1, v_2)$ to indicate the operation of sampling from the distribution $\chi((v_1, v_2), \cdot)$.
Intuitively, $\mu$ and $\chi$ are expected to be defined in some way that the ``offsprings'' inherits properties from their ``parents''.

While mutation and crossover can be used to grow the size of a population, an evolution process also requires a way to reduce its size.
This is done by a \emph{selection operator} $\sigma_\varphi: 2^V \to 2^V$, where $\sigma_\varphi(U)$ is the subset of individuals of $U$ which are selected to survive.
Selection is driven by a \emph{fitness function} $\varphi: V \to \mathbb{R}_{\geq 0}$, which evaluates the fitness of each individual, where lower values indicate higher fitness.
Exactly how the selection is done, and specifically how $\varphi$ is defined, depends on the specific context, but in general the idea is to select individuals with lower fitness values, and remove the others.

The goal of the algorithm is to find elements $\varphi^{-1}(0) \subset V$, these are the \emph{solutions} of the algorithm.
Once a solution is found, the algorithm terminates and returns it.
It is clear that $\varphi$ needs to satisfy some notion of regularity or continuity, so that it is possible to approach the solutions set $S$ by iteratively improving the fitness of the individuals in the population.

The algorithm starts from an initial population $U \subset V$ of individuals, and iteratively let evolution act on them through mutations, crossover and selection, until optimal solutions are found.
In general there is no guarantee that the algorithm will terminate, so a maximum number of iterations $T$ is set in advance.
This procedure is described in Algorithm~\ref{alg:general}.

\begin{algorithm}[htbp]
    \SetKwInOut{Input}{input}
    \SetKwInOut{Output}{output}
    \Input{Initial population $U$}
    \Output{Optimal solutions}
    \For{$t \gets 1$ \KwTo $T$}{
        Repeatedly apply $\mu$ and $\chi$ to individuals in $U$\;
        \If{$0 \in \varphi(U)$}{
            \Return{$\{u \in U \mid \varphi(u) = 0 \}$} \;
        }    
        $U \gets \sigma_\varphi(U)$\;
    }
    \caption{General procedure for an evolutionary algorithm.}
    \label{alg:general}
\end{algorithm}

Note how the fitness function $\varphi$ is the cornerstone of the algorithm.
It selects which individuals are allowed to survive, and it determines when the algorithm terminates.
Note that in other applications $\varphi$ might never evaluate to zero for any individual, and the algorithm might be forced to terminate when a local minimum is found or after a certain number of iterations.

\section{Evolving vectors into \texorpdfstring{$h^*$}{}-vectors}\label{sec:implementation}

In this section we specialize Algorithm~\ref{alg:general} to search for a counterexample to Question~\ref{question:main}.
The main idea here is to define a search space as the pairs of vectors of nonnegative integers of finite length satisfying certain unimodality-related properties.
On this space and to define a fitness function $\varphi$ which penalizes individuals for ``not looking enough like $h^*$-vectors of lattice polytopes'', in the hope that vectors will be driven into evolving into $h^*$-vectors of lattice polytopes.
We formalize this by defining the following properties that a pair of vectors $(h, h')$ can have.
We make use of the operator $\Pi$, previously defined in Equation~\eqref{eq:prod}.
\begin{enumerate}[i.]
    \item $h$ and $h'$ are both unimodal,\label{item:cond_unimodal}
    \item $\Pi(h, h')$ is not unimodal,\label{item:cond_product}
    \item $h$ and $h'$ are $h^*$-vectors of some lattice polytopes.\label{item:cond_realizable}
\end{enumerate}
Any pair of vectors satisfying these properties would give a counterexample to Question~\ref{question:main}.

We will define the search space as the set of all possible vectors of nonnegative integers of finite length satisfying properties~\ref{item:cond_unimodal} and~\ref{item:cond_product}.
The verification of properties \ref{item:cond_unimodal} and~\ref{item:cond_product} is computationally inexpensive, making it possible for the population to evolve quickly through the search space.
Moreover, Example~\ref{ex:ferroni_higashitani} guarantees that this set is not empty, giving us candidates to start the search with.

Conversely, checking property~\ref{item:cond_realizable} is an exceptionally hard problem in Ehrhart Theory.
While algorithms to check whether a given vector is the $h^*$-vector of a lattice polytope can be theoretically implemented through an exhaustive search \cite{balletti2021enumeration}, no efficient algorithm is known to date.
Consequently, it is unclear how to set up a fitness function which would allow to approach pairs of vectors satisfying property~\ref{item:cond_realizable}.
We know however that the $h^*$-vectors of lattice polytopes need to fulfill all the conditions from Theorem~\ref{thm:conditions}, so we can use these to define a fitness function which penalizes vectors which do not satisfy them, depending on how much they violate the conditions.
Although this is not a way to certify property~\ref{item:cond_realizable}, we can use it to obtain a large catalogue of candidates for being a counterexample to Question~\ref{question:main}, largely simplifying the task.
We describe this fitness function in Subsection~\ref{subsec:selection}.

The specialized algorithm can be setup in two different variants, each with its own advantages and disadvantages.
\begin{enumerate}[A]
    \item We can look for individual elements $h$ such that $(h, h)$ satisfies properties~\ref{item:cond_unimodal}--~\ref{item:cond_realizable}.
        This introduces a strong bias in the search, possibly reducing its potential to find solutions, but making the algorithm more efficient.\label{item:h} 
    \item We can look for pairs of elements $(h, h')$ such that $(h, h')$ satisfies properties~\ref{item:cond_unimodal}--~\ref{item:cond_realizable}.
        This allows to explore a large and unbiased search space, but the search will be computationally expensive.\label{item:hh}
\end{enumerate}

For simplicity we focus on variant~\ref{item:h}, as its implementation and notation will be much lighter.
Anyway its generalization \ref{item:hh} can be described by taking the cartesian product of two copies of the search space, extending the mutations and crossover operations to act independently on each copy, and defining the fitness function to be the sum of the fitness functions of the two copies.
The source code, which is available on GitHub\footnote{\url{https://github.com/gabrieleballetti/genetic-search}}, contains details about both the implementations.

\subsection{Search space}
Let $\mathcal{H}$ be the set of all possible vectors of nonnegative integers of finite length satisfying properties~\ref{item:cond_unimodal} and~\ref{item:cond_product}, i.e.
\[
    \mathcal{H} \coloneq \{ h \in \sqcup_{i > 0} \mathbb{Z}_{\geq 0}^i \mid \text{$h$ is unimodal and $\Pi(h, h)$ is not} \}.
\]
We can slightly modify Example~\ref{ex:ferroni_higashitani} to show that $\mathcal{H}$ is not empty.

\begin{example}\label{ex:ferroni_higashitani_bis}
    The pair $(h,h)$ with $h = (1, 1, 1, 1, 1, 6)$ satisfies properties~\ref{item:cond_unimodal} and~\ref{item:cond_product}.
    Indeed $h$ is unimodal, while $\Pi(h, h) = (1, 38, 300, 962, 1849, 7417, 7034, 7272, 4610, 973, 36)$ is not.
\end{example}

As for Example~\ref{ex:ferroni_higashitani}, the vector $h$ above cannot be the $h^*$-vector of a lattice polytope.

\subsection{Mutation}
We make the search space $\mathcal{H}$ into a graph by defining mutations, which will act as the directed edges.
For an element $h = (h_0, h_1, \ldots, h_d) \in \mathcal{H}$ we define the following mutation operations:
\begin{itemize}
    \item the \emph{removal} mutations $\mu^{\operatorname{rem}}_{i}$, for all $0 \leq i \leq d$, where 
        \[
            \mu^{\operatorname{rem}}_{i}(h) = (h_0, \ldots, h_{i-1}, h_{i+1}, \ldots, h_d),
        \]
    \item the \emph{insertion} mutations $\mu^{\operatorname{ins}_a}_{i}$, for all $0 \leq i \leq d$, and $0 \leq a \leq \max_i (h)$, where
        \[
            \mu^{\operatorname{ins}_a}_{i}(h) = (h_0, \ldots, h_{i}, a, h_{i+1}, \ldots, h_d),
        \]
    \item the \emph{decrement} mutations $\mu^{-}_{i}$, for all $0 \leq i \leq d$, where
        \[
            \mu^{-}_{i}(h) = (h_0, \ldots, h_{i-1}, h_i - 1, h_{i+1}, \ldots, h_d),
        \]
    \item the \emph{increment} mutations $\mu^{+}_{i}$ for all $0 \leq i \leq d$, where
        \[
            \mu^{+}_{i}(h) = (h_0, \ldots, h_{i-1}, h_i + 1, h_{i+1}, \ldots, h_d).
        \]
\end{itemize}

We exclude all mutations which would result in a vector which is not in $\mathcal{H}$, for example the decrement of a zero entry, or any insertion which would result in a vector which is not unimodal.
We denote by $M(h)$ the set of all possible mutations for an individual $h$, and we set for each of them weight $\frac{1}{|M(h)|}$, meaning that applying the mutation operator $\mu$ we will get any of the possible mutations in $M(h)$ with equal probability.
Note that, a priori, it is not trivial to determine if $\mathcal{H}$ is connected, or even if $h$ from Example~\ref{ex:ferroni_higashitani_bis} has any neighbors.
This means that at each step we will check which mutations are still in $\mathcal{H}$, out of all the possible ways to mutate an individual $h$, and only consider those.

\begin{example}
    Let $h = (1, 1, 1, 1, 1, 6) \in \mathcal{H}$ be the vector from Example~\ref{ex:ferroni_higashitani_bis}.
    The following are the neighbors of $h$, i.e. all the allowed mutations of $h$:
    \begin{align*}
        \mu^{\operatorname{ins}_1}_{i}(h) &= (1, 1, 1, 1, 1, 1, 6), \text{ for $0 \leq i \leq 4$}\\
        \mu^{\operatorname{ins}_2}_{4}(h) &= (1, 1, 1, 1, 1, 2, 6), \\
        \mu^{\operatorname{ins}_3}_{4}(h) &= (1, 1, 1, 1, 1, 3, 6), \\
        \mu^{+}_{5}(h) &= (1, 1, 1, 1, 1, 7).
    \end{align*}
    This means that $\mu(h)$ will return one of above vectors with equal probability $\frac{1}{4}$.
\end{example}

\subsection{Crossover}
The crossover operation is defined as follows.
Let $h = (h_0, h_1, \ldots, h_d)$ and $h' = (h'_0, h'_1, \ldots, h'_{d'})$ be two elements of $\mathcal{H}$, and assume $d \geq d'$.
We define their crossover operation $\chi(h, h')$ to return -- with equal probability -- any of the vectors of the form $g = (g_0, g_1, \ldots, g_d) \in \mathcal{H}$ such that for each $0 \leq i \leq d'$ we have either $h_i \leq g_i \leq h'_i$ or $h'_i \leq g_i \leq h_i$, and
$g_i = h_i$ whenever $i > d'$.
Although this definition might seem quite arbitrary, we found empirically that crossover greatly improves performance and convergence of the algorithm, as shown in Figure~\ref{fig:crossover}.
Generally, one can see crossover as a way to allow individuals trapped in local minima for the fitness function to still play a role in the evolution of the population.

\subsection{Selection}\label{subsec:selection}
We define the fitness function $\varphi: \mathcal{H} \to \mathbb{R}_{\geq 0}$ to be the measure of how much an element $h = (h_0, h_1, \ldots, h_d)$ violates the conditions from Theorem~\ref{thm:conditions}.
Specifically, given any individual $h \in \mathcal{H}$, $\varphi(h)$ is the sum of the violations of each condition divided by $d$, i.e. 
\[
    \varphi(h) \coloneq \frac{1}{d}\left(\varphi_1(h) + \varphi_2(h) + \varphi_3(h) + \varphi_4(h)\right),
\] 
where  
\begin{enumerate}
    \item $\varphi_1(h) \coloneq \max(0, h_d - h_1)$,
    \item $\varphi_2(h) \coloneq \sum_{i = 2}^{\lfloor \frac{d}{2} \rfloor} \max(0, h_{d-i+1} + \cdots + h_{d-1} - (h_{2} + \cdots + h_{i}))$,
    \item $\varphi_3(h) \coloneq \sum_{i = 0}^{\lfloor \frac{s}{2} \rfloor} \max(0, h_0 + \cdots + h_i - (h_s + \cdots + h_{s-i}))$,
    \item $\varphi_4(h) \coloneq \left \{
        \begin{array}{l}
            \sum_{i = 1}^{d-1} \max(0, h_i - h_1), \text{ if } s = d, \\
            \sum_{i = 1}^{d-1} \max(0, h_0 + h_1 - (h_i + h_{i-1} + \cdots + h_{i-(d-s)})), \text{ if } s < d.\\
        \end{array} \right .$
\end{enumerate}
Above, $s$ is the index of the last nonzero entry of $h$.
If $h$ happens to be the $h^*$-vector of a lattice polytope $P$, then $s$ would be the degree of the $h^*$-polynomial of $P$.
Dividing the sum by $d$ has a normalizing effect on the fitness function, and allows to interpret the value of $\varphi(h)$ (up to a constant) as the average violation of the inequalities of Theorem~\ref{thm:conditions}.
This ensures that the algorithm does not penalize long vectors excessively.
Note that $h$ is a solution, i.e. $h \in \varphi^{-1}(0)$, if and only if $h$ satisfies all the conditions of Theorem~\ref{thm:conditions}.

\subsection{Population management}
The population at a given step of the algorithm -- we call such step a \emph{generation} -- will be increased through mutations and crossover, and reduced through selection.
Note however that mutation and crossover have opposite effects on the diversity of the population.
Indeed, mutation tends to increase the diversity of the population, while crossover tends to decrease it.
In order to control how the diversity evolves over time, we set probabilities $p_\mu$ and $p_\chi$, such that $p_\mu + p_\chi = 1$, and we use them to decide whether to apply mutation or crossover to the population every time that we increase population by one.
At each generation, we artificially increase the population until a certain constant $N_\text{max}$ is reached, and then we apply selection to reduce it to  $N_\text{min}$.

\subsection{Avoiding local minima}
It is possible for individuals to sit on local minima for the fitness function $\varphi$.
Crossover helps such individuals to still play a role in the evolution of the population, but eventually these individuals might become attractors for future generations, slowing down the global convergence towards a solution.
In order to avoid this, we introduce a \emph{penalty factor} $\varepsilon \in \mathbb{R}_{>1}$ to penalize individuals which have been in the population for too many generations.
Specifically, for any individual $h\in \mathcal{H}$ we will multiply the output of the fitness function $\varphi(h)$ by $\varepsilon^{\operatorname{age}(h)}$, where $\operatorname{age}(h)$ is the number of generations that $h$ has been in the population.
As a result, the selection process will tend to favor younger individuals.
This is an important step for the convergence of the algorithm: as shown in Figure~\ref{fig:evolution_h} the starting individual needs to evolve through states with a higher fitness value, before being able to evolve into a solution.

\subsection{Pseudocode}
The pseudocode for the algorithm is presented in Algorithm~\ref{alg:implementation}.
We use the following two auxiliary functions to make the pseudocode more readable.
\begin{itemize}
    \item $\operatorname{increase}_{p_\mu}(U)$ is the operation of increasing the population $U$ by one individual, by applying the following steps:
        \begin{enumerate}[1.]
            \item randomly choose whether to apply mutation or crossover, depending $p_\mu$,
            \item pick one individual (two for crossover) randomly from $U$,
            \item apply the chosen operation, $\mu$ or $\chi$, to the individual(s),
            \item add the resulting individual to $U$.
        \end{enumerate}
    \item $\operatorname{age}(h)$ is the number of generations that $h$ has been in the population.
\end{itemize}

\begin{algorithm}[htbp]
    \SetKwInOut{Input}{input}
    \SetKwInOut{Output}{output}
    \SetKwInOut{Parameters}{parameters}
    \Parameters{
        $p_\mu \in [0,1]$, probability of applying mutation instead of crossover, \\
        $N_\text{max} \in \mathbb{N}$, upper limit for the population, \\
        $N_\text{min} \in \mathbb{N}$, lower limit for the population \\
        $\varepsilon \in \mathbb{R}_{>1}$, penalty factor, \\
        $T \in \mathbb{N}$, maximum number of iterations.
    }
    \Input{$U \subset \mathcal{H}$, initial population}
    \Output{Optimal solutions}
    \For{$t \gets 1$ \KwTo $T$}{
        \While{$|U| < N_\text{max}$}{
            $\operatorname{increase}_{p_\mu}(U)$
        }
        \If{$0 \in \varphi(U)$}{
            \Return{$\{h \in U \mid \varphi(h) = 0 \}$} \;
        }
        \While{$|U| > N_\text{min}$}{
            $h_{\max} \gets \arg \max_{h \in U} \varphi(h) \varepsilon^{\operatorname{age}(h)} $\;  
            $U \gets U \setminus \{h_{\max}\}$\;
        }
    }
    \caption{Evolutionary search algorithm on $\mathcal{H}$ (variant~\ref{item:h}).}
    \label{alg:implementation}
\end{algorithm}

\subsection{Further exploration of the solutions space}\label{subsec:further_exploration}
Once one or multiple solutions are found, it is possible to generate new solutions by iteratively mutating the individuals found so far, and adding them to the population.
If we consider the induced subgraph of the search space given by the subset of solutions $\varphi^{-1}(0) \subset \mathcal{H}$, then this procedure is equivalent to a breadth-first exploration of the subgraph, or at least of the connected component containing the first solution found.
Since there is no obvious way to check if a given $h \in \mathcal{H}$ is the $h^*$-vector of a lattice polytope, having a way to generate new solutions will help us to find realizable vectors.

\subsection{Runs and results}
Algorithm~\ref{alg:implementation} is run multiple times with parameters $N_\text{max} = 30$, $N_\text{min} = 5$, $p_\mu = 0.5$, $p_\chi = 0.5$, $\varepsilon = 1.05$, 
and initial population $U$ set to $U = \{(1, 1, 1, 1, 1, 6)\}$.

All the runs converged to one or more solutions within the first 100 generations.
We can plot the average fitness value of the best individual in the population -- which is the one with the lowest fitness score -- at each generation, to observe how the population changes over time.
In Figure~\ref{fig:crossover} we show such plots for ten different runs, showing the difference between ``normal'' runs ($p_\mu = 0.5$), and runs without any crossover ($p_\mu = 1$).

\begin{figure}[htbp]
    \centering
    \subfloat[$p_\mu = 0.5$]{\includegraphics[width=0.5\linewidth]{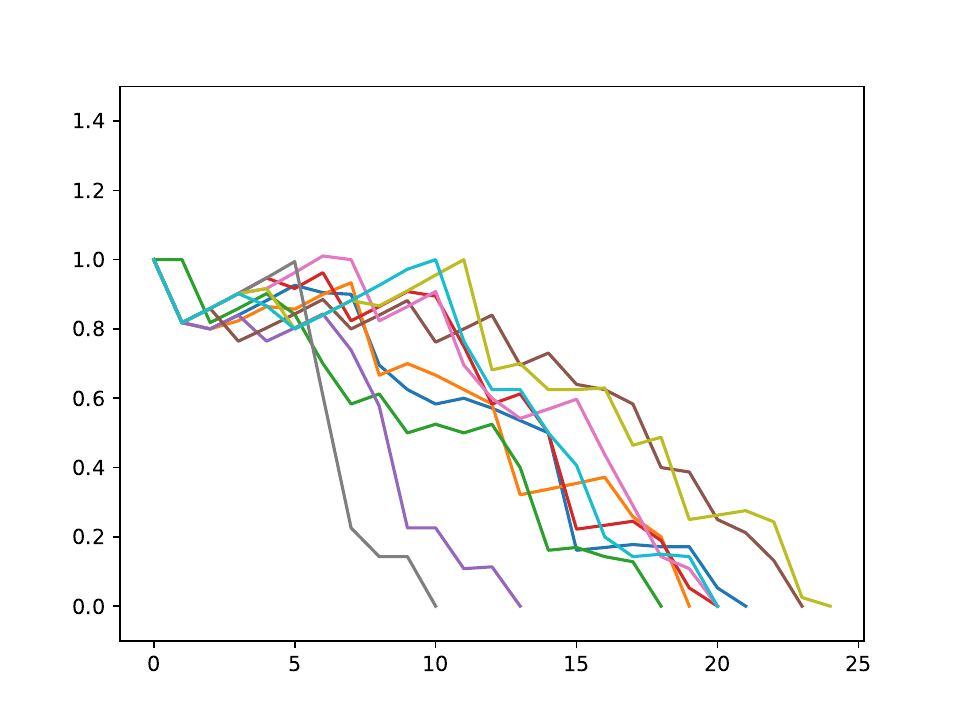}}%
    \subfloat[$p_\mu = 1$]{\includegraphics[width=0.5\linewidth]{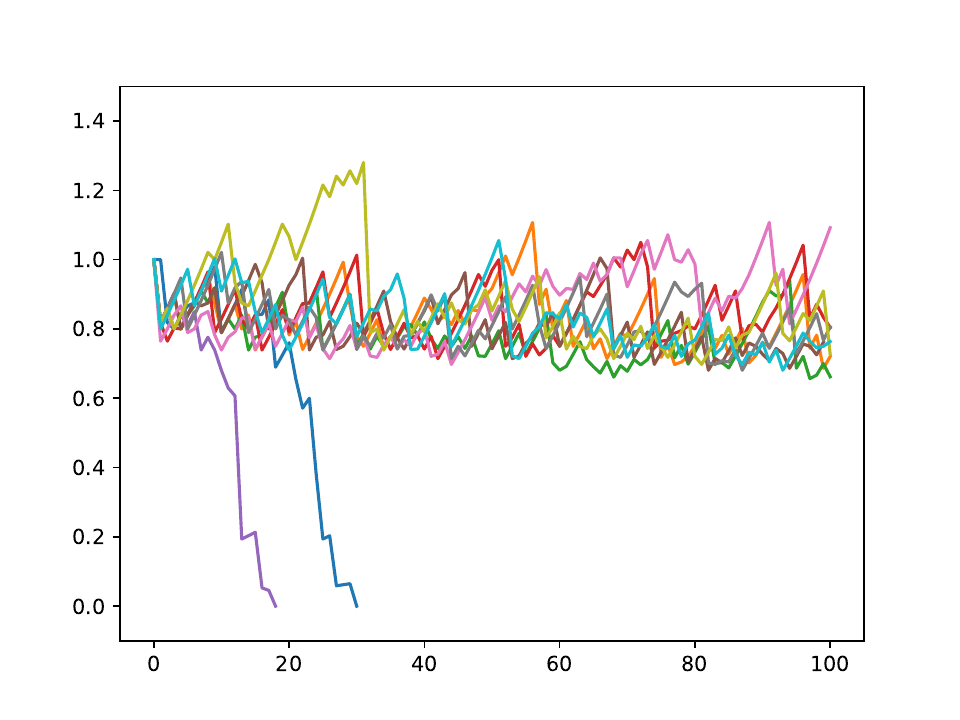}}%
    \caption{
        Average population fitness value (y axis) at each generation (x axis) over ten different runs of Algorithm~\ref{alg:implementation} with (left) and without (right) crossover.
        Most of the runs without crossover did not converge to a solution within the first 100 generations and were interrupted.
        The runs with crossover had a 50\% chance of applying mutation and a 50\% chance of applying crossover.
        Other parameters: $N_\text{max} = 30$, $N_\text{min} = 5$, $\varepsilon = 1.05$.
    }
    \label{fig:crossover}
\end{figure}

Picking a very small population will result in longer runs, but it allows us to observe the evolution of a single individual in more detail, as shown in Figure~\ref{fig:evolution_h}.
It is interesting to notice that the initial individuals are in a local minimum for the fitness function, and they need to evolve through states with a higher fitness value before being able to evolve into a solution.
This suggests that a greedy approach to the problem would have most likely failed.

\begin{figure}[htbp]
    \centering
    \includegraphics[width=0.75\textwidth]{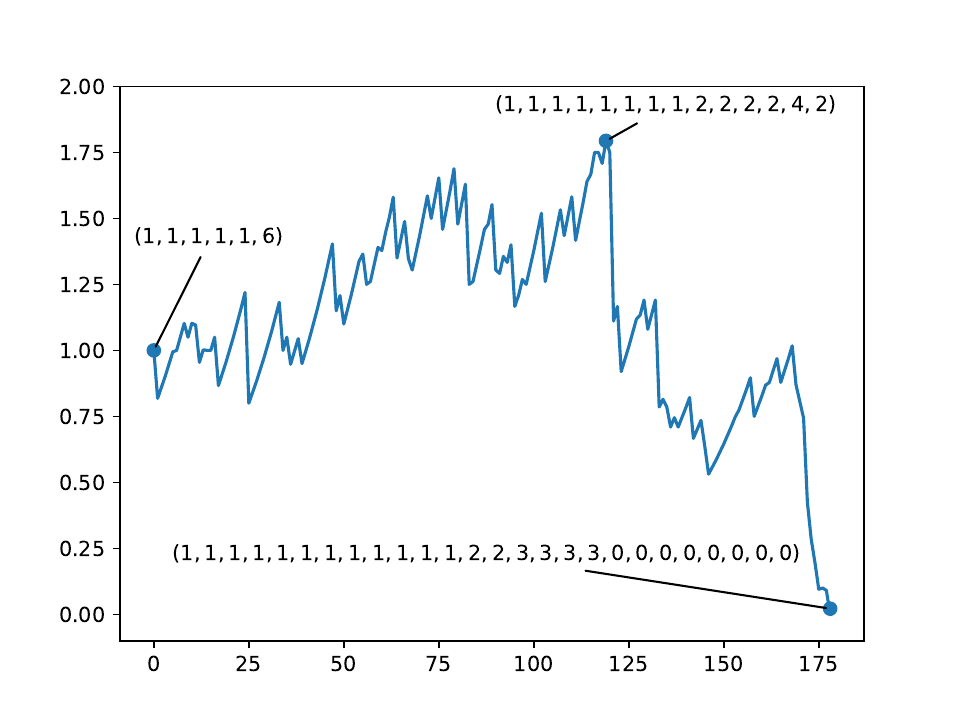}
    \caption{
        A plot of the fitness values (y axis) for the evolution of the starting individual $(1, 1, 1, 1, 1, 6)$ over $178$ generations (x axis), before it reaches a solution.
        Plot obtained with $N_\text{max} = 3$, $N_\text{min} = 1$, $p_\mu = 0.5$, $p_\chi = 0.5$, $\varepsilon = 1.05$.
    }
    \label{fig:evolution_h}
\end{figure}

The set of all solutions found are then expanded to 1000 elements each with the procedure described in Subsection~\ref{subsec:further_exploration}, ending up with a list of almost 5000 elements.
Of them, the shorter vector has length 22, and all of them have at least ten leading ones and at least seven trailing zeros.
See Table~\ref{tab:samples} for a random sample of solutions found by running Algorithm~\ref{alg:implementation}.
The full list of solutions found is available together with the source code of the implementation.

\begin{center}
    \small
    \begin{longtable}{l}
        \label{tab:samples}\\
        \toprule
        \endfirsthead
        \orow $(1, 1, 1, 1, 1, 1, 1, 1, 1, 1, 1, 1, 3, 7, 2, 0, 0, 0, 0, 0, 0, 0, 0, 0)$\\
        \erow $(1, 1, 1, 1, 1, 1, 1, 1, 1, 1, 1, 1, 1, 3, 5, 5, 0, 0, 0, 0, 0, 0, 0, 0, 0, 0)$\\
        \orow $(1, 1, 1, 1, 1, 1, 1, 1, 1, 1, 1, 1, 1, 1, 9, 3, 0, 0, 0, 0, 0, 0, 0, 0, 0, 0, 0)$\\
        \erow $(1, 1, 1, 1, 1, 1, 1, 1, 1, 1, 1, 1, 1, 2, 4, 7, 1, 0, 0, 0, 0, 0, 0, 0, 0, 0, 0)$\\
        \orow $(1, 1, 1, 1, 1, 1, 1, 1, 1, 1, 1, 1, 1, 2, 5, 4, 2, 2, 0, 0, 0, 0, 0, 0, 0, 0, 0)$\\
        \erow $(1, 1, 1, 1, 1, 1, 1, 1, 1, 1, 1, 1, 1, 3, 3, 5, 4, 0, 0, 0, 0, 0, 0, 0, 0, 0, 0)$\\
        \orow $(1, 1, 1, 1, 1, 1, 1, 1, 1, 1, 1, 1, 2, 2, 3, 3, 6, 0, 0, 0, 0, 0, 0, 0, 0, 0, 0)$\\
        \erow $(1, 1, 1, 1, 1, 1, 1, 1, 1, 1, 1, 1, 1, 1, 2, 5, 4, 2, 1, 0, 0, 0, 0, 0, 0, 0, 0, 0)$\\
        \orow $(1, 1, 1, 1, 1, 1, 1, 1, 1, 1, 1, 1, 1, 2, 2, 4, 4, 4, 0, 0, 0, 0, 0, 0, 0, 0, 0, 0)$\\
        \erow $(1, 1, 1, 1, 1, 1, 1, 1, 1, 1, 1, 1, 1, 2, 3, 3, 6, 1, 0, 0, 0, 0, 0, 0, 0, 0, 0, 0)$\\
        \orow $(1, 1, 1, 1, 1, 1, 1, 1, 1, 1, 1, 1, 1, 1, 1, 2, 5, 6, 1, 1, 0, 0, 0, 0, 0, 0, 0, 0, 0, 0)$\\
        \erow $(1, 1, 1, 1, 1, 1, 1, 1, 1, 1, 1, 1, 1, 1, 2, 2, 2, 5, 4, 1, 1, 1, 0, 0, 0, 0, 0, 0, 0, 0)$\\
        \orow $(1, 1, 1, 1, 1, 1, 1, 1, 1, 1, 1, 1, 1, 1, 2, 2, 2, 3, 4, 4, 2, 0, 0, 0, 0, 0, 0, 0, 0, 0, 0)$\\
        \erow $(1, 1, 1, 1, 1, 1, 1, 1, 1, 1, 1, 1, 1, 1, 2, 2, 2, 2, 2, 5, 4, 1, 1, 0, 0, 0, 0, 0, 0, 0, 0, 0)$\\
        \orow $(1, 1, 1, 1, 1, 1, 1, 1, 1, 1, 1, 1, 1, 1, 1, 1, 2, 3, 4, 4, 2, 1, 1, 0, 0, 0, 0, 0, 0, 0, 0, 0, 0)$\\
        \bottomrule
        \caption{A random sample of 15 solutions found by running Algorithm~\ref{alg:implementation} (variant~\ref{item:h})}
    \end{longtable}
\end{center}

The fact that solutions were found proves the following nontrivial result.

\begin{theorem}\label{thm:first}
    The set of integer vectors in $\mathcal{H}$ satisfying all conditions from Theorem~\ref{thm:conditions} is not empty.
\end{theorem}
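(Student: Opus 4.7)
The statement is essentially an existence claim, and since Algorithm~\ref{alg:implementation} already produces concrete candidates (see Table~\ref{tab:samples}), the natural plan is to prove the theorem by exhibiting one such candidate and verifying the three required properties directly. I would pick one of the shorter vectors produced by the algorithm, say $h = (1, 1, 1, 1, 1, 1, 1, 1, 1, 1, 1, 1, 3, 7, 2, 0, \ldots, 0)$ from the top row of the table, and check that it (i) is unimodal, (ii) satisfies all five inequalities of Theorem~\ref{thm:conditions}, and (iii) has the property that $\Pi(h,h)$ is not unimodal.

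Condition (i) is evident by inspection since the entries are weakly increasing up to the peak at position $13$ and weakly decreasing afterwards. Condition (ii) reduces to a finite list of numerical inequalities among the $h_i$'s; for each of the five items of Theorem~\ref{thm:conditions} one just substitutes the entries of $h$ (noting that $s$, the degree of the associated $h^*$-polynomial, equals the index of the last nonzero entry) and checks the inequality. These checks are equivalent to verifying that $\varphi(h) = 0$ where $\varphi$ is the fitness function of Subsection~\ref{subsec:selection}, which is exactly what the algorithm certifies upon termination.

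The computationally heaviest step is (iii): showing that $\Pi(h,h)$ is not unimodal. This requires unfolding the operator $\Pi$ from Equation~\eqref{eq:prod}, that is, applying $\mathscr{E}$ to convert $h$ into the Ehrhart polynomial of the ``would-be'' polytope, squaring this polynomial, and then applying $\mathscr{W}$ to recover the $h^*$-vector of the product. This amounts to two polynomial evaluations and an expansion in Eulerian basis, all purely mechanical. The output is a vector of length $2d+1$ and one need only exhibit a single index where weak monotonicity of both the increasing and the decreasing parts fails simultaneously (equivalently, an index $i$ such that $h_i^{\Pi} < \max_{j<i} h_j^{\Pi}$ and $h_i^{\Pi} < \max_{j>i} h_j^{\Pi}$) to conclude non-unimodality.

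The main obstacle, if any, is bookkeeping: both the coefficients of $\mathscr{E}(h)$ and those of $\Pi(h,h)$ involve large integers, so the verification should be done with exact arithmetic (e.g.\ in SageMath or a computer algebra system) rather than by hand. Importantly, I do not need to show that $h$ is actually realizable as the $h^*$-vector of a lattice polytope — that is the subject of Section~\ref{sec:realizing} — only that such a vector $h$, passing all known necessary conditions of Theorem~\ref{thm:conditions}, exists. Thus the proof is in the end a certificate: display $h$, display $\Pi(h,h)$, and point to the offending index.
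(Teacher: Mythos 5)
Your proposal is correct and matches the paper's argument: the theorem is proved there precisely by exhibiting explicit vectors found by the algorithm (those in Table~\ref{tab:samples} and the solution in \eqref{eq:solutions_h}) that are certified to lie in $\mathcal{H}$ and to satisfy $\varphi(h)=0$, i.e.\ all the inequalities of Theorem~\ref{thm:conditions}. Your observation that realizability as an actual $h^*$-vector is not required here, being deferred to Section~\ref{sec:realizing}, is exactly the distinction the paper draws as well.
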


Still, it is not clear whether any of the solutions found so far is the $h^*$-vector of a lattice polytope.
We proceed by selecting a solution with a peculiar shape: the pair $(h,h)$ with
\begin{equation}\label{eq:solutions_h}
    \begin{aligned}
        &h  = (1, 1, 1, 1, 1, 1, 1, 1, 1, 1, 1, 1, 1, 1, 12, 0, 0, 0, 0, 0, 0, 0, 0, 0, 0, 0, 0, 0) \in \mathbb{Z}^{28}.
    \end{aligned}
\end{equation}
In Section~\ref{sec:realizing} we will show that it is the $h^*$-vector of a lattice polytope.
Once we know that there are solutions $(h, h)$, with
\begin{equation}\label{eq:solution_general}
    h = (1, \underbrace{1, \ldots, 1}_{k-1}, m, \underbrace{0, \ldots, 0}_{k-1}) \in \mathbb{Z}^{2k},
\end{equation}
for some positive integers $k$ and $m$, we can focus our efforts on finding solutions of this form, with the goal of finding solutions as small as possible.
Unfortunately, a grid search for small values of $k$ and $m$ does not lead to any smaller solutions.

\begin{proposition}\label{prop:extra_solutions}
    The following are all the solutions of the form $(h, h)$ with $h$ as in \eqref{eq:solution_general}, with $k \leq 20$ and $m \leq 100$:
    \begin{itemize}
        \item $k = 14$, $m = 12$, which is the solution \eqref{eq:solutions_h},
        \item $k = 15$, $m = \{12, 13\}$,
        \item $k = 16$, $m = \{13, 14\}$, 
        \item $k = 17$, $m = \{13, 14, 15, 16\}$,
        \item $k = 18$, $m = \{13, 14, 15, 16, 17\}$,
        \item $k = 19$, $m = \{14, 15, 16, 17, 18, 19, 20\}$,
        \item $k = 20$, $m = \{14, 15, 16, 17, 18, 19, 20, 21, 22\}$.
    \end{itemize}
\end{proposition}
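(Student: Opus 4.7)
The plan is a direct finite verification. The search region $1 \leq k \leq 20$, $1 \leq m \leq 100$ contains only $2000$ candidate pairs, and for each of them the three properties characterizing membership in the solution set --- unimodality of $h$, non-unimodality of $\Pi(h,h)$, and satisfaction of all inequalities from Theorem~\ref{thm:conditions} (equivalently, $\varphi(h) = 0$) --- are checkable by finite arithmetic. The proposition therefore reduces to running this check over the grid and comparing the output with the claimed list.

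Step one, unimodality of $h = (1,\ldots,1,m,0,\ldots,0)$, is automatic for any $m \geq 1$, since the sequence is weakly increasing up to position $k$ and then identically zero. Step two, non-unimodality of $\Pi(h,h)$: apply Equation~\eqref{eq:h_to_e} to expand $\mathscr{E}(h)$ as a polynomial of degree $2k-1$, square it to obtain a polynomial of degree $4k-2$, then apply Equation~\eqref{eq:e_to_h} to recover an $h^*$-vector of length $4k-1$, whose unimodality is tested entrywise. Step three, the conditions of Theorem~\ref{thm:conditions}: these form a finite list of explicit linear inequalities in the entries of $h$, each of which can be evaluated directly.

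The obstacle here is not theoretical but one of implementation: a naive computation of $\Pi(h,h)$ involves Eulerian polynomials of index up to $4k-2 \leq 78$ and a polynomial product of comparable degree, so care is needed to carry out the exact integer arithmetic correctly in order to avoid overflow or truncation errors that could produce spurious violations or spurious solutions. These routines are already implemented in the source code accompanying Algorithm~\ref{alg:implementation}; running them over the $2000$ pairs returns exactly the list stated in the proposition, and no others, which establishes the claim.
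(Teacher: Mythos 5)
Your proposal is correct and matches the paper's (implicit) justification: the proposition is established by an exhaustive grid search over the $2000$ pairs $(k,m)$ with $k \leq 20$, $m \leq 100$, checking membership in $\mathcal{H}$ and the vanishing of the fitness function $\varphi$ exactly as you describe, using the routines from the accompanying source code. The only substantive content beyond running the computation is your (correct) observation that unimodality of $h$ is automatic, so the check reduces to non-unimodality of $\Pi(h,h)$ and the inequalities of Theorem~\ref{thm:conditions}.
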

It is interesting to notice that fixing the length of the vectors with $k$ seems to restrict the possible values of $m$ to a finite interval.
On the other hand, we can now also look for solutions of the form $(h, h')$ with both $h$ and $h'$ being in the form \eqref{eq:solution_general}, but not equal to each other.
Another grid search for small values of $k$ and $m$ allows us to reduce the dimension of the minimal solution, but only slightly.
Indeed the pair $(h, h')$, with
\begin{equation}\label{eq:solutions_hh}
    \begin{aligned}
        &h  = (1, 1, 1, 1, 1, 1, 1, 1, 1, 1, 1, 1, 1, 14, 0, 0, 0, 0, 0, 0, 0, 0, 0, 0, 0, 0)  \in \mathbb{Z}^{26},\\
        &h' = (1, 1, 1, 1, 1, 1, 1, 1, 1, 1, 1, 1, 1, 1, 9, 0, 0, 0, 0, 0, 0, 0, 0, 0, 0, 0, 0, 0) \in \mathbb{Z}^{28},
    \end{aligned}
\end{equation}
satisfies all the conditions from Theorem~\ref{thm:conditions}, and $\Pi(h, h')$ is not unimodal.   

\section{Realizing \texorpdfstring{$h^*$}{}-vectors}\label{sec:realizing}

In this section we focus on finding lattice polytopes whose $h^*$-vectors are the solutions found in the previous section, thus answering Question~\ref{question:main} negatively.
We do so by finding a family of lattice polytopes in dimension $d = 2k - 1$ whose $h^*$-vectors are of the form
\[
    (1, \underbrace{r, \ldots, r}_{k-1}, r + q, \underbrace{0, \ldots, 0}_{k-1}),	
\]
where $r$ and $q$ can be any nonnegative integers, and $k \geq 2$.
An important family of simplices on which we base our construction is the \emph{generalized Reeve simplex} by Batyrev and Hofscheier.

\begin{proposition}[{\cite{batyrev_hofscheier_22}}]\label{prop:reeve}
    Let $d=2k-1$ for $k\geq 2$, and $q$ be a nonnegative integer.
    Consider the generalized Reeve simplex in dimension $d$
    \[
        R_{q,k}:=\operatorname{conv}\{0, e_1, \ldots, e_{d-1}, u_{q, k} \},
    \]
    where $u_{q, k} = (\underbrace{1,\ldots,1}_{k-1},\underbrace{q,\ldots,q}_{k-1},q+1)$.
    Then we have
    \[ 
        h^*_{R_{q,k}} = (1, \underbrace{0, \ldots, 0}_{k-1} , q, \underbrace{0, \ldots, 0}_{k-1}).
    \]
\end{proposition}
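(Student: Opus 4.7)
The plan is to compute $h^*_{R_{q,k}}$ directly from the fundamental parallelepiped of the cone over $R_{q,k}$, which is the standard tool for the $h^*$-polynomial of a lattice simplex. Writing $v_0 = (0,1)$, $v_j = (e_j, 1)$ for $1 \leq j \leq d-1$, and $v_d = (u_{q,k}, 1)$ for the lifted vertices in $\R^{d+1}$, I would use
\[
    h^*_{R_{q,k}}(t) = \sum_{p \in \Pi \cap \Z^{d+1}} t^{\operatorname{ht}(p)}, \qquad \Pi = \Bigl\{ \sum_{i=0}^{d} \lambda_i v_i : \lambda_i \in [0,1) \Bigr\},
\]
where $\operatorname{ht}(p) = \sum_i \lambda_i$ is the last coordinate. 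The task thus reduces to listing all tuples $(\lambda_0, \ldots, \lambda_d) \in [0,1)^{d+1}$ for which every coordinate of $\sum \lambda_i v_i$ is an integer, and recording $\sum \lambda_i$.

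First I would exploit the last coordinate $j = d$ of $u_{q,k}$: the only contribution there is $\lambda_d(q+1)$, which forces $\lambda_d = \frac{a}{q+1}$ for some $a \in \{0, 1, \ldots, q\}$. The case $a = 0$ immediately yields the trivial lattice point, contributing the leading $1$ to $h^*$. For $a \geq 1$, the integrality conditions on the remaining coordinates determine the other $\lambda_j$ uniquely: for $1 \leq j \leq k-1$ one finds $\lambda_j = 1 - \lambda_d = \frac{q+1-a}{q+1}$, while for $k \leq j \leq 2k-2$ the condition $\lambda_j + q\lambda_d \in \Z$ combined with $q \equiv -1 \pmod{q+1}$ gives $\lambda_j = \frac{a}{q+1}$. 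This is the computational heart of the argument and where I expect small sign/ceiling errors to be easiest to make; the identity $q\lambda_d = a - \frac{a}{q+1}$ makes $\lceil q\lambda_d \rceil = a$ transparent and should prevent such slips.

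Finally I would add up the heights. The $2(k-1)$ coordinates split into $k-1$ terms equal to $\frac{q+1-a}{q+1}$ and $k-1$ terms equal to $\frac{a}{q+1}$, contributing exactly $k-1$, plus $\lambda_d = \frac{a}{q+1}$. Requiring $\sum_{i=0}^{d} \lambda_i \in \Z$ then forces $\lambda_0 = \frac{q+1-a}{q+1}$, which lies in $(0,1)$ precisely because $1 \leq a \leq q$, and the total height equals $k$. So each $a \in \{1, \ldots, q\}$ contributes one lattice point of height $k$, yielding
\[
    h^*_{R_{q,k}} = (1, \underbrace{0, \ldots, 0}_{k-1}, q, \underbrace{0, \ldots, 0}_{k-1}),
\]
as claimed. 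The only genuine obstacle is the bookkeeping of fractional parts in the middle block; everything else is mechanical once the reduction to the half-open parallelepiped is set up.
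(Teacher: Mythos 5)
Your proof is correct. The paper itself does not prove this proposition --- it is quoted from Batyrev--Hofscheier with a citation and no argument --- so your half-open parallelepiped computation serves as a self-contained verification rather than an alternative to anything in the text. The reduction is sound: $\lambda_d(q+1)\in\Z$ forces $\lambda_d=a/(q+1)$; for $a\geq 1$ the first block gives $\lambda_j=(q+1-a)/(q+1)$ and the middle block gives $\lambda_j=a/(q+1)$ via $q\lambda_d\equiv -a/(q+1)\pmod 1$; the $2(k-1)$ spatial contributions pair up to exactly $k-1$, and integrality of the height pins $\lambda_0=(q+1-a)/(q+1)$, giving total height $k$ for each of the $q$ choices of $a$, plus the trivial point at height $0$. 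This matches $(1,0,\ldots,0,q,0,\ldots,0)$ with the spike at index $k$. The only point worth making explicit is that the parallelepiped formula applies because $R_{q,k}$ is a full-dimensional lattice simplex (the last coordinate $q+1\neq 0$ of $u_{q,k}$ guarantees this), so the lifted generators are linearly independent and every lattice point of the cone decomposes uniquely; with that noted, the argument is complete.
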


Stanley's monotonicity theorem~\cite{stanley1993monotonicity} guarantees that, if we build a lattice polytope by adding vertices to $R_{q,k}$, then its ``spike'' on the $k$-th entry will be preserved, meaning that any polytope containing $R_{q,k}$ will have $h^*$-polynomial with $k$-th coefficient greater than or equal to $q$.
In Proposition~\ref{prop:gentleman} we show that it is possible to add a point to $R_{q,k}$ and obtain a polytope with the desired $h^*$-vector. 
For its proof we need to introduce properties of certain point configurations called circuits, we refer to \cite{de2010triangulations} for a more detailed introduction to the topic, and to \cite{blanco2016lattice, balletti2021enumeration} for applications to the study of lattice polytopes.

We say that a point configuration $\mathcal{A}$ given by the points $p_0, \ldots, p_{d+1} \in \mathbb{R}^d$ is a \emph{circuit} if it satisfies a unique (up to scalar multiplication) affine dependence $\sum_{i = 0}^{d+1} \lambda_i p_i = 0$,
where $\lambda_i \neq 0$ for all $0 \leq i \leq d+1$ and $\sum_{i = 0}^{d+1} \lambda_i = 0$.
We denote by $J_+$ and $J_-$ the sets of indices of the positive and negative coefficients of the dependence, i.e.
\[
    J_+ = \{ v_i \in \mathcal{A} \mid \lambda_i > 0 \}, \quad J_- = \{ v_i \in \mathcal{A} \mid \lambda_i < 0 \},
\]
the partition of $\mathcal{A}$ into $J_+$ and $J_-$ is called the \emph{Radon partition} or the \emph{oriented circuit} of the circuit.
The following lemma lists well-known properties of polytopes obtained as convex hulls of circuits, see \cite[{Lemma~2.4.2 and Remark~4.1.8}]{de2010triangulations} for the proofs and further details.

\begin{lemma}\label{lem:circuits}
    Let $\mathcal{A} = v_0, \ldots, v_{d+1} \in \mathbb{R}^d$ be a circuit satisfying the affine linear dependence $\sum_{i = 0}^{d+1} \lambda_i v_i = 0$.
    Let moreover $\mathcal{A} = J_+ \cup J_-$ be its Radon partition.
    Then the $d$-dimensional polytope $P_\mathcal{A} \coloneq \operatorname{conv}(v_0, \ldots, v_{d+1})$ satisfies the following properties.
    \begin{enumerate}[(a)] 
        \item $P_\mathcal{A}$ has exactly two triangulations on the vertices $v_0, \ldots, v_{d+1}$, namely
        \[
            \mathcal{T}_+\coloneq\left\{C \subset \mathcal{A} : J_+ \not\subseteq C\right\},
            \qquad \text{and} \qquad
            \mathcal{T}_-\coloneq\left\{C \subset \mathcal{A} : J_- \not\subseteq C\right\}.
        \]\label{item:triangulations}
        \item There is a constant $\alpha \in \mathbb{R}_{>0}$ such that $\alpha|\lambda_i|$ equals the normalized volume $\operatorname{Vol}(S_i)$ of the full-dimensional simplex $S_i \coloneq \operatorname{conv}(v_0, \ldots, v_{i-1}, v_{i+1}, \ldots, v_{d+1})$ for all $0 \leq i \leq d+1$.\label{item:volume}
            In particular, $P_\mathcal{A}$ has normalized volume
            \[
                \operatorname{Vol}(P_\mathcal{A}) = \alpha\sum_{v_i \in J_+} \lambda_i = -\alpha\sum_{v_i \in J_-} \lambda_i.
            \]
    \end{enumerate}
\end{lemma}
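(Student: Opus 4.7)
The plan is to exploit the one-dimensional space of affine dependencies on $\mathcal{A}$ to simultaneously pin down the triangulations and the volume ratios. The two parts share a common algebraic input: the vector $(\lambda_0,\ldots,\lambda_{d+1})$ spans the kernel of the $(d+1)\times(d+2)$ homogeneous matrix $N$ whose $i$-th column is $\binom{v_i}{1}$, so every geometric quantity attached to $\mathcal{A}$ should be expressible in terms of these kernel coordinates.

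For part (a), the starting observation is that the identities $\sum_i \lambda_i v_i = 0$ and $\sum_i \lambda_i = 0$ force
\[
    p \;\coloneq\; \frac{1}{\Lambda}\sum_{i \in J_+}\lambda_i v_i \;=\; -\frac{1}{\Lambda}\sum_{i \in J_-}\lambda_i v_i, \qquad \Lambda \coloneq \sum_{i\in J_+}\lambda_i,
\]
to lie in the relative interiors of \emph{both} $\operatorname{conv}(J_+)$ and $\operatorname{conv}(J_-)$. Since these two open simplices overlap, no simplicial complex with vertices in $\mathcal{A}$ can use both $J_+$ and $J_-$ as faces, so any triangulation on the vertex set $\mathcal{A}$ must omit at least one of them, leaving only the two candidates $\mathcal{T}_+$ and $\mathcal{T}_-$. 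I would then verify that each candidate really is a triangulation by noting that for $v_j \in J_+$ the simplex $S_j$ is exactly the join of $\{p\}$ with the facet of $P_\mathcal{A}$ opposite $v_j$; as $v_j$ ranges over $J_+$ these facets fan out around $p$ and their joins pave $P_\mathcal{A}$ without interior overlaps. The argument for $\mathcal{T}_-$ is symmetric.

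For part (b), the normalized volume $\operatorname{Vol}(S_i)$ equals the absolute value of the determinant of the $(d+1)\times(d+1)$ minor $N_i$ of $N$ obtained by deleting the $i$-th column. Cofactor expansion (equivalently Cramer's rule) gives that the vector of signed minors $\bigl((-1)^i \det N_i\bigr)_{i}$ lies in the kernel of $N$, hence is a scalar multiple of $(\lambda_0,\ldots,\lambda_{d+1})$; writing this scalar as $c$, we get $|\det N_i| = |c|\,|\lambda_i|$, so $\alpha \coloneq |c|$ yields the first assertion. The volume formula for $P_\mathcal{A}$ then follows by summing $\operatorname{Vol}(S_j) = \alpha \lambda_j$ over $v_j \in J_+$ using the triangulation $\mathcal{T}_+$ from part (a), and similarly summing $\alpha|\lambda_j| = -\alpha\lambda_j$ over $v_j \in J_-$ using $\mathcal{T}_-$; the two expressions agree because $\sum_i \lambda_i = 0$.

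The main obstacle I expect is the uniqueness claim in part (a): showing that no more exotic triangulation on the vertex set $\mathcal{A}$ can exist beyond $\mathcal{T}_\pm$. The overlap argument above only rules out triangulations that contain both $J_+$ and $J_-$ as faces; one must also argue that once we decide to avoid $J_+$ (say), the simplices $S_j$ for $v_j \in J_+$ are \emph{forced} to all appear. This hinges on the uniqueness (up to scaling) of the affine dependence, which leaves essentially no combinatorial flexibility once the non-face is chosen, but a careful verification that the resulting complex is pure of dimension $d$ and has the right boundary is needed to make this rigorous.
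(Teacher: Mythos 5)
First, a point of reference: the paper does not prove this lemma at all --- it is quoted verbatim from \cite[Lemma~2.4.2 and Remark~4.1.8]{de2010triangulations} --- so there is no in-paper argument to compare against, and your sketch has to stand on its own. Your part (b) does: the vector of signed maximal minors of the $(d+1)\times(d+2)$ matrix with columns $\binom{v_i}{1}$ lies in its kernel, the kernel is one-dimensional by the circuit hypothesis, and the proportionality constant is nonzero because $\mathcal{A}$ affinely spans $\mathbb{R}^d$; this is exactly the standard proof. (Do note, though, that the final volume identity in (b) uses $\mathcal{T}_\pm$ being triangulations, so it inherits whatever is missing from (a).)

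Part (a) has two problems. The smaller one is that your description of $S_j$, for $v_j\in J_+$, as ``the join of $\{p\}$ with the facet of $P_\mathcal{A}$ opposite $v_j$'' is simply false: $S_j=\operatorname{conv}(\mathcal{A}\setminus\{v_j\})$ does not have $p$ as a vertex; rather, $p$ lies in the relative interior of the face $\operatorname{conv}(J_-)$ of $S_j$. The picture you describe is the star triangulation obtained by adding $p$ as a new vertex, which is a different (and here irrelevant) subdivision. The larger problem is the one you flag yourself. The overlap argument shows only that a triangulation cannot contain both $J_+$ and $J_-$ as faces; since $S_j\supseteq J_+$ exactly when $v_j\in J_-$, this forces the set of maximal cells to be indexed by a subset of $J_+$ or a subset of $J_-$, but it does not show that all cells of that type occur, nor that $\mathcal{T}_+$ and $\mathcal{T}_-$ are triangulations in the first place (covering and interior-disjointness). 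The standard way to close this --- and the actual content of the cited Lemma~2.4.2 --- is the coefficient-shifting argument: write $x\in P_\mathcal{A}$ as $\sum_i\mu_i v_i$ with $\mu_i\ge 0$ and $\sum_i\mu_i=1$, and translate the coefficient vector by $t(\lambda_0,\ldots,\lambda_{d+1})$; for one sign of $t$ the first coefficient to vanish has index in $J_+$ (for the other sign, in $J_-$), which shows that the $S_j$ with $v_j\in J_+$ cover $P_\mathcal{A}$, while the uniqueness of the affine dependence yields the disjointness of their interiors and the fact that every such $S_j$ is needed. Without some argument of this kind, part (a) is not proved.
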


\begin{proposition}\label{prop:gentleman}
    Let $d=2k-1$ for $k\geq 2$, and $q, r$ be nonnegative integers.
    Consider the lattice polytope
    \[
        P_{q,r,k}:=\operatorname{conv}(R_{q,k} \cup \{ v_{r, k} \}) = \operatorname{conv}\{0, e_1, \ldots, e_{d-1}, u_{q, k}, v_{r, k} \},
    \]
    with $u_{q, k} = (\underbrace{1,\ldots,1}_{k-1},\underbrace{q,\ldots,q}_{k-1},q+1)$ and $v_{r, k} = (\underbrace{0,\ldots,0}_{k-1},\underbrace{-r,\ldots,-r}_{k})$.
    Then we have
    \[ 
        h^*_{P_{q,r,k}} = (1, \underbrace{r, \ldots, r}_{k-1} , r + q, \underbrace{0, \ldots, 0}_{k-1}).
    \]
\end{proposition}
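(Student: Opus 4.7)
The plan is to recognise the vertex set $\mathcal{A}=\{0,e_1,\ldots,e_{d-1},u_{q,k},v_{r,k}\}$ of $P_{q,r,k}$ as a circuit (it has $d+2$ points in $\mathbb{R}^d$), pick one of the two triangulations supplied by Lemma~\ref{lem:circuits}, and compute $h^*_{P_{q,r,k}}$ via the associated half-open decomposition.

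A coordinatewise check shows that the unique (up to scalar) affine dependence on $\mathcal{A}$ is
\[
-(r+q+1)\cdot 0 \;-\; r\sum_{i=1}^{k-1}e_i \;+\; r\sum_{i=k}^{d-1}e_i \;+\; r\cdot u_{q,k} \;+\; (q+1)\cdot v_{r,k}\;=\;0,
\]
whose Radon partition is $J_-=\{0,e_1,\ldots,e_{k-1}\}$ and $J_+=\{e_k,\ldots,e_{d-1},u_{q,k},v_{r,k}\}$. Since removing $v_{r,k}$ from $\mathcal{A}$ yields $R_{q,k}$, whose normalised volume is $q+1$ by Proposition~\ref{prop:reeve}, Lemma~\ref{lem:circuits}(b) forces the scaling constant $\alpha=1$ and gives $\operatorname{Vol}(P_{q,r,k})=rk+q+1$, already matching the sum of the claimed $h^*$-vector. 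I use the triangulation $\mathcal{T}_-$ from Lemma~\ref{lem:circuits}(a), whose simplices are $S_v:=\operatorname{conv}(\mathcal{A}\setminus\{v\})$ for $v\in J_-$; any two of them share the facet $\operatorname{conv}(\mathcal{A}\setminus\{v,w\})$, so ordering them as $S_0, S_{e_1}, \ldots, S_{e_{k-1}}$ and removing from each $S_{e_i}$ ($i\geq 1$) the $i$ facets it shares with the earlier simplices (the facets opposite to $0,e_1,\ldots,e_{i-1}$) produces a disjoint half-open decomposition of $P_{q,r,k}$. Consequently
\[
h^*_{P_{q,r,k}}(x)\;=\;h^*_{S_0}(x)\;+\;\sum_{i=1}^{k-1}h^*_{\widetilde S_{e_i}}(x),
\]
with each summand read off from the appropriate (half-open) fundamental parallelepiped.

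For $S_0$, writing a parallelepiped point as $\sum_{i=1}^{d-1}\lambda_i e_i+\alpha u_{q,k}+\beta v_{r,k}$ with coefficients in $[0,1)$, combining the last-coordinate condition $\alpha(q+1)-\beta r\in\mathbb{Z}$ with the other coordinate congruences forces $\lambda_i=1-\alpha$ for $1\leq i\leq k-1$ and $\lambda_i=\alpha$ for $k\leq i\leq d-1$, while height integrality then imposes $\beta=1-\alpha$ together with $\alpha(q+r+1)\in\mathbb{Z}$. This produces the origin plus $q+r$ further points, all of height $k$, so $h^*_{S_0}(x)=1+(q+r)x^k$. For $\widetilde S_{e_i}$, the missing vertex $e_i$ makes the $i$-th coordinate condition collapse to $\alpha\in\mathbb{Z}$, hence $\alpha=0$; the open shifts $\lambda_w\in(0,1]$ for $w\in\{0,e_1,\ldots,e_{i-1}\}$ together with the remaining coordinate conditions pin the coefficients of $e_1,\ldots,e_{i-1}$ to $1$, and $\beta r\in\mathbb{Z}$ then leaves exactly $r$ lattice points (parametrised by $\beta\in\{0,1/r,\ldots,(r-1)/r\}$), all of height $i$. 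Hence $h^*_{\widetilde S_{e_i}}(x)=r x^i$, and summing gives $h^*_{P_{q,r,k}}(x)=1+\sum_{i=1}^{k-1} r x^i+(q+r)x^k$, the claimed $h^*$-vector.

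The main delicacy is the simultaneous bookkeeping of the parallelepiped constraints. The coordinates split into three groups (the first $k-1$, the middle $k-1$, and the last, with different contributions from $u_{q,k}$ and $v_{r,k}$), and the half-open shifts $\lambda_w\in(0,1]$ interact with both the coordinate congruences and the overall height integrality; once these groups are disentangled, most lattice coefficients in each $\widetilde S_{e_i}$ are pinned to $0$ or $1$ and the point counts collapse cleanly.
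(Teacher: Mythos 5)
Your proof is correct, and it takes a genuinely different route from the paper's. Both arguments start by recognizing the vertex set as a circuit with the affine dependence you wrote down (which I checked coordinate by coordinate) and by pinning down $\operatorname{Vol}(P_{q,r,k}) = kr+q+1$ via Lemma~\ref{lem:circuits}(b). From there the paper does \emph{not} compute the $h^*$-vector directly: it establishes the lower bounds $h^*_1 \geq r$ (by counting the lattice points on the edge from $0$ to $v_{r,k}$) and $h^*_k \geq q+r$ (by exhibiting an explicit unimodular copy of $R_{q+r,k}$ inside $P_{q,r,k}$ and invoking Stanley's monotonicity theorem), and then closes with a sandwich $kr+q+1 = \operatorname{Vol}(P_{q,r,k}) \geq \sum_{i=0}^{k} h^*_i \geq 1+(k-1)r+(q+r) = kr+q+1$ forcing all bounds to be equalities. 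You instead take the triangulation $\mathcal{T}_-$ into the $k$ simplices $S_v$, $v \in J_-$, turn it into a disjoint half-open decomposition (valid here since all pairwise intersections are common facets), and read every coefficient off the half-open fundamental parallelepipeds; your counts $h^*_{S_0}(x) = 1+(q+r)x^k$ and $h^*_{\widetilde{S}_{e_i}}(x) = rx^i$ check out, and they sum to the claimed vector. Your approach costs more bookkeeping but is more self-contained: it needs neither Stanley monotonicity nor the explicit unimodular matrix $A$, and it produces all coefficients at once rather than bounding two of them and squeezing (the paper's final inequality $\sum_{i=0}^k h^*_i \geq 1+(k-1)r+(q+r)$ tacitly also needs $h^*_i \geq r$ for $2 \leq i \leq k-1$, a step your computation sidesteps entirely). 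Two small points to tidy up: you should, as the paper does, set aside the case $r=0$ at the outset (there $v_{r,k}=0$, the configuration is not a circuit, and your parametrization $\beta \in \{0,1/r,\ldots,(r-1)/r\}$ is vacuous; the statement is then just Proposition~\ref{prop:reeve}), and in the $S_0$ computation the formula $\lambda_i = 1-\alpha$ should be read as the fractional part, i.e.\ $\lambda_i=0$ when $\alpha=0$, which is the origin case you already account for separately.
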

\begin{proof}
    We assume $r > 0$, as the case $r = 0$ is just Proposition~\ref{prop:reeve}.
    We denote by $\mathcal{A}$ the point configuration $v_0, \ldots, v_{d+1} \in \mathbb{R}^d$ the vertices of $P_{q,r,k}$, where the $v_i$ appear in the same order as the corresponding vertices appear in Proposition~\ref{prop:reeve}, and $v_{d+1} = v_{r, k}$.
    Note that $\mathcal{A}$ is a circuit, as it satisfies the affine dependence $\sum_{i = 0}^{d+1} \lambda_i v_i = 0$, where
    \[
        (\lambda_0, \ldots, \lambda_{d+1}) = (-q-r-1, \underbrace{-r, \ldots, -r}_{k-1}, \underbrace{r, \ldots, r}_{k}, q + 1) \in \mathbb{R}^{d+2},
    \]
    By Lemma~\ref{lem:circuits}(\ref{item:triangulations}) we know that $P$ has a triangulation $\mathcal{T}_+$ whose full-dimensional simplices are $S_k, \ldots, S_{d+1}$, where $S_i = \operatorname{conv}(v_0, \ldots, v_{i-1}, v_{i+1}, \ldots, v_{d+1})$.
    By Lemma~\ref{lem:circuits}(\ref{item:volume}), the normalized volume of $P_{q,r,k}$ is $\alpha(kr + q + 1)$, for some positive $\alpha \in \mathbb{R}_{>0}$.
    By verifying that $\operatorname{Vol}(S_{d+1}) = q + 1$ we can deduce that $\alpha = 1$, from which we get $\Vol(P_{q,r,k}) = kr + q + 1$.

    Note that the edge between $v_0$ and $v_{d+1}$ contains the $r-1$ (non-vertices) lattice points of the form $(0, \ldots, 0, -a, \ldots, -a)$ for all $1 \leq a \leq r - 1$.
    We can then deduce that $|P_{q,r,k} \cap \mathbb{Z}^d| \geq d + 2 + (r-1)$, where $d+2$ is the number of vertices of $P_{q,r,k}$.
    From this it follows that $h^*_{P_{q,r,k}, 1} = |P_{q,r,k} \cap \mathbb{Z}^{d}| - d - 1 \geq r$.

    We now verify that the $P_{q,r,k}$ contains a unimodular copy of the Reeve simplex $R_{q + r,k}$.
    We do this by finding the following unimodular linear map $A$ mapping $R_{q + r,k}$ to $S_0 -v_{d+1}$, where $S_0 = \conv(v_1, \ldots, v_{d+1}) \subset P_{q,r,k}$:
    {\footnotesize
    \[
        A =
        \begin{pmatrix}
            1      &        & 0      & -r   & \cdots & -r   & -r     \\
                   & \ddots &        & -r   & \cdots & -r   & -r     \\
            0      &        & 1      & -r   & \cdots & -r   & -r     \\
            0      & \cdots & 0      & -r+1 &        & -r   & -r     \\
            \vdots &        & \vdots &     & \ddots &     & \vdots   \\
            0      & \cdots & 0      & -r   &        & -r+1 & -r     \\
            \undermat{k-1}{0  & \cdots & 0 } & \undermat{k-1}{(k-1)r & \cdots & (k-1)r } & (k-1)r + 1 \\
        \end{pmatrix}.\\
        % add "a line" of vertical space here
        \vspace{1.5em}
    \]
    }%
    Since $h^*_{R_{q + r,k}}(x) = 1 + (q + r) x^k$, it follows that $h^*_{P_{q,r,k}, k} \geq q + r$.
    Note that, although one can verify that $A$ is indeed unimodular, the inequality above follows simply from $A$ being injective, as $A$ maps a full-dimensional simplex to another full-dimensional simplex.

    We conclude by noting that $kr + q + 1 = \Vol(P_{q,r,k}) \geq \sum_{i = 0}^k h^*_{P_{q,r,k}, i} \geq 1 + (k-1)r + (q + r) = kr + q + 1$, which implies that all the lower bounds for the $h^*$ coefficients we have found above are equalities.
\end{proof}

From the solutions \eqref{eq:solutions_h} and \eqref{eq:solutions_hh} we found in Section~\ref{sec:implementation}, together with Proposition~\ref{prop:gentleman}, we can finally find two counterexamples to Question~\ref{question:main}, the smallest of which is in dimension 52.

\begin{theorem}\label{thm:counterexamples}	
    There exists a 52-dimensional lattice polytope having a non-unimodal $h^*$-vector which is the cartesian product of two lattice polytopes having unimodal $h^*$-vectors.
    There exists a 54-dimensional lattice polytope having a non-unimodal $h^*$-vector which is the cartesian product of a lattice polytope having unimodal $h^*$-vector with itself.
\end{theorem}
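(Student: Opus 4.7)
The plan is to apply Proposition~\ref{prop:gentleman} directly to the solution vectors identified in \eqref{eq:solutions_h} and \eqref{eq:solutions_hh}, realizing each as the $h^*$-vector of an explicit lattice polytope, and then take cartesian products.

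First I would match the parameters. The vector in \eqref{eq:solutions_h} has the shape $(1, \underbrace{1,\ldots,1}_{13}, 12, \underbrace{0,\ldots,0}_{13})$, which is exactly the output of Proposition~\ref{prop:gentleman} with $k = 14$, $r = 1$, $q = 11$. This yields a lattice polytope $P_{11,1,14}$ of dimension $d = 2k - 1 = 27$ whose $h^*$-vector is the vector $h$ of \eqref{eq:solutions_h}. Similarly, the two vectors in \eqref{eq:solutions_hh} correspond to the parameter triples $(k,r,q) = (13,1,13)$ and $(k,r,q) = (14,1,8)$, yielding lattice polytopes $P_{13,1,13}$ and $P_{8,1,14}$ of dimensions $25$ and $27$ respectively.

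For the 52-dimensional statement, I would take $P \coloneq P_{13,1,13}$ and $Q \coloneq P_{8,1,14}$. Then $P \times Q$ is a lattice polytope of dimension $25 + 27 = 52$, and its $h^*$-vector is $\Pi(h, h')$ for the vectors $h, h'$ of \eqref{eq:solutions_hh}. Both $h$ and $h'$ are visibly unimodal (a flat run of $1$'s, a single peak, then trailing zeros), while $\Pi(h, h')$ is non-unimodal by the defining property of these solutions. For the 54-dimensional statement, the same argument with $P = Q = P_{11,1,14}$ and $h$ from \eqref{eq:solutions_h} produces a self-product $P \times P$ with the required properties.

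There is essentially no conceptual obstacle; the proof is a direct application of Proposition~\ref{prop:gentleman} together with the multiplicativity $h^*_{P_1 \times P_2} = \Pi(h^*_{P_1}, h^*_{P_2})$ from \eqref{eq:prod}. The only remaining verification is the explicit (non-)unimodality of $\Pi(h,h)$ and $\Pi(h,h')$, which reduces to a finite polynomial computation via $\mathscr{W}$ and $\mathscr{E}$; this is tractable because each $h^*$-vector has only two distinct nonzero entries, so the products $\mathscr{E}(h)\mathscr{E}(h)$ and $\mathscr{E}(h)\mathscr{E}(h')$ admit compact closed forms as linear combinations of shifted binomial coefficients.
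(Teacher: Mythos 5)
Your proposal is correct and follows exactly the paper's own proof: the paper realizes the vectors of \eqref{eq:solutions_hh} as $h^*_{P_{13,1,13}}$ and $h^*_{P_{8,1,14}}$ (dimensions $25$ and $27$) and the vector of \eqref{eq:solutions_h} as $h^*_{P_{11,1,14}}$ (dimension $27$) via Proposition~\ref{prop:gentleman}, then takes the cartesian products and exhibits the non-unimodal $h^*$-vectors by explicit computation of $\Pi$. Your parameter identifications are all accurate, so the only difference is that the paper prints the full product $h^*$-vectors with the offending dip underlined, whereas you defer that finite verification.
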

\begin{proof}
    Let $P_1 = P_{13,1,13} \subset \mathbb{R}^{25}$, and $P_2 = P_{8,1,14} \subset \mathbb{R}^{27}$, then
    \begin{align*}
        h^*_{P_1} &= (1, 1, 1, 1, 1, 1, 1, 1, 1, 1, 1, 1, 1, 14, 0, 0, 0, 0, 0, 0, 0, 0, 0, 0, 0, 0), \text{ and }\\
        h^*_{P_2} &= (1, 1, 1, 1, 1, 1, 1, 1, 1, 1, 1, 1, 1, 1, 9, 0, 0, 0, 0, 0, 0, 0, 0, 0, 0, 0, 0, 0).
    \end{align*}
    Their cartesian product $P_1 \times P_2 \subset \mathbb{R}^{52}$ has $h^*$-vector
    {\footnotesize
    \begin{align*}
        h^*_{P_1 \times P_2} = (&1, 730, 124309, 8765488, 323507917, 7103108746, 100619559775, 972997522804,\\
        &6701090224708, 34010137321612, 131025667114366, 393954147455920, 950686811060974,\\
        &1897069464954708, 3236287404712418, 4916467171316240, 7021322555787755,\\
        &9969874216848215, 14257154161508525, 19582563191473835, 24471834266882555,\\
        &27411039808716545, 28264217617840835, \underline{28180199302315925}, 28214521222404290,\\
        &28017151283317190, 25859926811345216, 20532901649723562, 13208986918862508, \\
        &6616676890501554, 2511085966199400, 705968725977246, 143760309232842\\
        &20655352730313, 2024932293684, 129413526255, 5049065826, 108582147, 1075968\\
        &3285, 0, 0, 0, 0, 0, 0, 0, 0, 0, 0, 0, 0, 0),
    \end{align*}
    }%
    where the underlined value $h^*_{23}$ is strictly smaller than the previous and following values.
    Let instead $Q = P_{11,1,14} \subset \mathbb{R}^{27}$.
    Then,
    \begin{align*}
        h^*_{Q} &= (1, 1, 1, 1, 1, 1, 1, 1, 1, 1, 1, 1, 1, 1, 12, 0, 0, 0, 0, 0, 0, 0, 0, 0, 0, 0, 0, 0),
    \end{align*}
    The cartesian product $Q \times Q \subset \mathbb{R}^{54}$ has $h^*$-vector
    {\footnotesize
    \begin{align*}
        h^*_{Q \times Q} = (&1, 786, 144455, 11020300, 441121770, 10530181640, 162552269110, 1716525078180,\\
        &12930814578275, 71850371688370, 302985944870565, 995261924725160, 2613021081859780,\\
        &5632695584460000, 10262874607660101, 16321723300358178, 23556670816935360, \\
        &32503659106949550, 45236238837124725, 63958867580029500, 86973762823550460, \\
        &107371181389097340, 118749821455655325, 121429882556886150, \underline{121087083599905800}\\
        &122150570332920090, 122520628829356317,114289161415862640, 91957387766542420,\\
        &60298585492287800, 31048542899221005,12240036657236410, 3619520872541915,\\
        &786880755821820, 122905067056750, 13400888684480, 981797783010, 45828567940,\\
        &1259997095, 17987346, 105649, 144, 0, 0, 0, 0, 0, 0, 0, 0, 0, 0, 0, 0, 0),
    \end{align*}
    }%
    where the underlined value $h^*_{24}$ is strictly smaller than the previous and following values.
\end{proof}

\begin{remark}
    The first polytope described in Theorem~\ref{thm:counterexamples} is, dimensionally speaking, the smallest counterexample we could find with our method.
    The latter is also the smallest, given the additional constraint of being the cartesian product of a lattice polytope with itself.
    Anyway, they are not the only existing counterexamples, as each of the solutions listed in Proposition~\ref{prop:extra_solutions} corresponds to another such polytope.
    The increasing number of possible values of $m$ fixing the length of the vectors seems to indicate that not only an infinite family exists, but these polytopes get more common in higher dimensions.
    While trying to define such a family by explicitly expanding Equation~\eqref{eq:prod} in the case of vectors of the form $h^*_{P_{q,1,k}}$ seems possible, such an approach has not been explored in this work.
\end{remark}

It is worth mentioning that these cartesian products, although spanning, are not IDP nor very ample.
In particular, they are not counterexamples to the unimodality conjecture in these cases.

\section{Future directions and challenges}\label{sec:conclusion}

While now we know that unimodality of $h^*$-vectors of lattice polytopes is not guaranteed when taking the product of polytopes with unimodal $h^*$-vectors, we still do not know what can be said about log-concavity.
This means that \cite[Question~3.4~(b)]{ferroni_higashitani2023} is still open. 
Although one might think that the techniques used in this paper could be used to try to approach that question, a strong impediment is that there is no equivalent of Example~\ref{ex:ferroni_higashitani} for log-concavity, which means the same strategy cannot be used to try to investigate that question, as we have no base case to initialize a genetic algorithm with.
One might try to tweak a fitness function to specifically try to force evolution towards log-concave $h^*$-vectors, but beside the technical difficulties that this might imply, there is belief that a counterexample in the case of log-concavity might not exist, see the related discussion in \cite{ferroni_higashitani2023}.

The most immediate question that stems from this work is whether the cartesian product is a viable tool to construct counterexamples to the unimodality conjecture for IDP or very ample polytopes.
\begin{question}
    \leavevmode
    \begin{enumerate}[(a)]
        \item Are there lattice polytopes $P_1$ and $P_2$ which are IDP, such that the $h^*$-polynomial of $P_1 \times P_2$ is not unimodal?
        \item Are there lattice polytopes $P_1$ and $P_2$ which are very ample, such that the $h^*$-polynomial of $P_1 \times P_2$ is not unimodal?
    \end{enumerate}
\end{question}
Note that a positive answer to the first question would mean a counterexample to the unimodality conjecture for IDP polytopes, as such we expect that the answer is negative.
The second question, which is a natural generalization of the first, might be harder to answer, but it is important to remark that the $h^*$-vectors found in this paper (of which a random sample is shown in Table~\ref{tab:samples}) are most certainly not the right candidates for building such examples, due to their peculiar shapes and the restrictions that these impose on the geometry of the polytopes.
For this reason it is reasonable to believe that the answer to the second question is also negative. 

Finally, we believe that the family of polytopes $P_{q,r,k}$ constructed in Proposition~\ref{prop:gentleman} could be a rich source of examples for unimodality and log-concavity questions in Ehrhart Theory.
As an example, Theorem~\ref{thm:counterexamples} shows that they can used to build nontrivial examples of spanning polytopes with non-unimodal $h^*$-vectors, while $P_{3,6,107}$ can be used to replicate the counterexample from \cite[Theorem~5.5]{ferroni_higashitani2023} of a unimodal $h^*$-polynomial whose Ehrhart series is not log-concave.

\bibliographystyle{amsalpha}
\bibliography{bibliography}

\end{document}